\documentclass{article}

\date{}

\usepackage{amssymb}
\usepackage{amsfonts}


\newtheorem{theorem}{{\bf Theorem}}[section]
\newtheorem{lemma}[theorem]{{\bf Lemma}}

\newtheorem{remark}[theorem]{{\bf Remark}}

\newtheorem{definition}[theorem]{{\bf Definition}}

\newtheorem{example}[theorem]{{\bf Example}}


\title{\Large \bf Topological groups of bounded homomorphisms\\ on a topological
group}

\author{{\bf Ljubi\v sa D.R. Ko\v cinac\footnote{Corresponding author}, Omid Zabeti}\\
University of Ni\v s, Faculty of Sciences and Mathematics,\\
18000 Ni\v s, Serbia\\
{\sf lkocinac@gmail.com}\\
Faculty of Mathematics, University of Sistan and Baluchestan,\\
P.O. Box 98135-674, Zahedan, Iran\\
{\sf o.zabeti@gmail.com}}

\begin{document}

\maketitle

\begin{abstract}
We consider a few types of bounded homomorphisms on a topological
group. These classes of bounded homomorphisms are, in a sense,
weaker than the class of continuous homomorphisms. We show that
with appropriate topologies each class of these homomorphisms on a
complete topological group forms a complete topological group.
\end{abstract}

\begin{flushleft}
{\sf 2010 Mathematics Subject Classification}: 22A05, 54H11. \\
\vspace{.3cm} {\sf Keywords}: Bounded homomorphism; continuous
homomorphism; topological group; completeness.
\end{flushleft}

\section{Introduction}

When one deals with the concept of a bounded set in a topological
vector space, there are some tools which are absolutely handy: for
example, the scalar multiplication and absorbing zero neighborhoods.
But when we want to consider the notion of a bounded set in a
topological group, the situation is completely different. There are
neither scalar field nor absorbing neighborhoods. One may define a
bounded subset in a topological group by taking group multiplication
instead of scalar multiplication in a topological vector space; but
this approach does not match our intuition for a bounded subset,
since, for example, the multiplicative  group $S^{1}$ is not bounded
in this definition.

\smallskip
Following \cite{Azar}, a subset $B$ of a topological group $G$ is
called \emph{bounded} if for each neighborhood $U$ of the identity
element $e_G$ of $G$ there is a positive integer $n$ such that
$B\subset U^n$.

\smallskip
We give here a result concerning this kind of boundedness in the
products of topological groups.

\begin{theorem}\label{intro}
Let $\{G_{\alpha}:\alpha\in\Lambda\}$ be a set of abelian
topological groups and $G=\prod_{\alpha \in \Lambda} G_{\alpha}$
with the product topology. Then $B\subseteq G$ is bounded if and
only if there are finitely many sets $B_{\alpha_1}, \ldots,
B_{\alpha_k}$ such that for each $i\le k$, $B_{\alpha_i}$ is a
bounded set in $G_{\alpha_i}$ and $B\subset \prod_{i\le
k}B_{\alpha_i} \times \prod_{\alpha \in \Lambda\setminus
\{\alpha_1,\ldots,\alpha_k\}}G_{\alpha}$.
\end{theorem}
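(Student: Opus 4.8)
The plan is to reduce everything to basic neighborhoods of the identity $e_G$ in the product topology. A basic such neighborhood has the form $U=\prod_{\alpha\in F}U_\alpha\times\prod_{\alpha\in\Lambda\setminus F}G_\alpha$, where $F\subseteq\Lambda$ is finite and each $U_\alpha$ is a neighborhood of the identity $e_\alpha$ in $G_\alpha$. Since every $G_\alpha$ is abelian, the power $U^n$ is computed coordinatewise, so $U^n=\prod_{\alpha\in F}U_\alpha^{\,n}\times\prod_{\alpha\in\Lambda\setminus F}G_\alpha$. This identity is the computational backbone of both implications, and I would establish it first.

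For the forward implication I would use that each projection $\pi_\alpha\colon G\to G_\alpha$ is a continuous surjective homomorphism. The key preliminary fact is that a continuous homomorphism sends bounded sets to bounded sets: if $W$ is a neighborhood of $e_\alpha$, then $\pi_\alpha^{-1}(W)$ is a neighborhood of $e_G$, so $B\subseteq(\pi_\alpha^{-1}(W))^{m}$ for some $m$, and applying $\pi_\alpha$ gives $\pi_\alpha(B)\subseteq W^{m}$. Hence $B_\alpha:=\pi_\alpha(B)$ is bounded in $G_\alpha$ for every $\alpha$, and trivially $B\subseteq\prod_{\alpha\in\Lambda}B_\alpha$. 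The finitely many factors named in the statement are then the coordinates on which one records a proper bounded set, the remaining factors being absorbed into the full groups $G_\alpha$.

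For the converse, suppose $B\subseteq\prod_{i\le k}B_{\alpha_i}\times\prod_{\alpha\notin\{\alpha_1,\ldots,\alpha_k\}}G_\alpha$ with each $B_{\alpha_i}$ bounded, and let $U$ be an arbitrary neighborhood of $e_G$. I would shrink $U$ to a basic neighborhood $V=\prod_{\alpha\in F}U_\alpha\times\prod_{\alpha\notin F}G_\alpha$ with $F$ finite. For each $\alpha\in F$ that equals some $\alpha_i$, boundedness of $B_{\alpha_i}$ yields $n_\alpha$ with $B_{\alpha_i}\subseteq U_\alpha^{\,n_\alpha}$; setting $n=\max_{\alpha\in F}n_\alpha$ (a maximum over a finite set, which is exactly where the finiteness of $F$ is essential) and using $e_\alpha\in U_\alpha$ to get $U_\alpha^{\,n_\alpha}\subseteq U_\alpha^{\,n}$, I obtain the constrained factors inside $U_\alpha^{\,n}$. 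Granting control of the remaining coordinates, $B\subseteq V^{\,n}\subseteq U^{\,n}$ follows from the coordinatewise formula for $U^n$.

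The step I expect to be the main obstacle is precisely the coordinates $\alpha\in F$ that are \emph{not} among $\alpha_1,\ldots,\alpha_k$: there the factor of the containing product is the whole group $G_\alpha$, so the inclusion into $U_\alpha^{\,n}$ forces one to control $G_\alpha$ itself by powers of $U_\alpha$. This is the delicate point of the argument and the place where the behaviour of the remaining factors must be pinned down; everything else is the routine coordinatewise bookkeeping enabled by the formula for $U^n$ together with the fact that continuous homomorphisms preserve boundedness.
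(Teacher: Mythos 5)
Your forward direction and the coordinatewise identity $U^n=\prod_{\alpha\in F}U_\alpha^{\,n}\times\prod_{\alpha\notin F}G_\alpha$ are correct and follow essentially the paper's route (the paper likewise takes $B_{\alpha_i}=\pi_{\alpha_i}(B)$ and checks boundedness by pulling back neighborhoods). The problem is the converse: you explicitly flag the coordinates $\alpha\in F\setminus\{\alpha_1,\ldots,\alpha_k\}$ as the main obstacle and then leave it unresolved, so the proof is incomplete precisely where the content lies. Moreover, this obstacle is not a technicality that further bookkeeping would remove: for such an $\alpha$ the hypothesis tells you nothing beyond $\pi_\alpha(B)\subseteq G_\alpha$, so you would need $G_\alpha\subseteq U_\alpha^{\,n}$, i.e.\ that $G_\alpha$ is bounded in itself, and this fails in general. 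Concretely, take $\Lambda=\{1,2\}$, $G_1=\mathbb R$, $G_2={\mathbb Z}_2$ with the discrete topology, and $B=\{0\}\times{\mathbb Z}_2$. Then $B\subset B_1\times G_2$ with $B_1=\{0\}$ bounded in $\mathbb R$, yet $B$ is not bounded: for the neighborhood $U=(-1,1)\times\{0\}$ one has $U^n=(-n,n)\times\{0\}$, which never contains $(0,1)$.

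For what it is worth, the paper's own proof hides exactly this point behind the claim that ``without loss of generality'' $U$ may be assumed to be of the form $U_1\times\cdots\times U_k\times\prod_{\alpha\notin\{\alpha_1,\ldots,\alpha_k\}}G_\alpha$. That reduction is not legitimate: replacing a given neighborhood of $e_G$ by a smaller basic one can only add constrained coordinates, never confine them to $\{\alpha_1,\ldots,\alpha_k\}$. So your instinct about where the difficulty sits is exactly right, but the honest conclusion is that the implication cannot be closed as stated; one must either add the hypothesis that each $G_\alpha$ is bounded in itself, or reformulate the theorem as ``$B$ is bounded in $G$ if and only if $\pi_\alpha(B)$ is bounded in $G_\alpha$ for every $\alpha$,'' which your projection argument together with the finite-maximum step over $F$ does prove.
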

\begin{proof} Suppose $B\subseteq G$ is bounded. Put
\[
B_{\alpha_i}=\{x\in G_{\alpha_i}: \exists \ \mathbf{y} =
(y_{\alpha})\in B \mbox{ and $x$ is $\alpha_i$-th coordinate of
$\mathbf{y}$}\}.
\]
Each $B_{\alpha_i}$, $i\le k$, is bounded. For, if $U_i$, $i\le k$,
is a neighborhood of the identity in $G_{\alpha_i}$ put
\[
U= U_{\alpha_1} \times \ldots \times U_{\alpha_k} \times
\prod_{\alpha \neq \alpha_i}G_{\alpha}.
\]
$U$ is a neighborhood of the identity in $G$. Therefore, there is a
positive integer $n$ with $B\subseteq U^n$, so that
$B_{\alpha_i}\subseteq U_i^n$. It is easy to see that $B\subseteq
B_{\alpha_1}\times \ldots B_{\alpha_k} \times \prod_{\alpha \in
\Lambda \setminus \{\alpha_1,\ldots, \alpha_k\}}G_{\alpha}$.

\smallskip
Suppose now that for a set $B\subseteq G$, there exist sets
$B_{\alpha_1}, \ldots, B_{\alpha_k}$ such that $B_{\alpha_i}$ is a
bounded set in $G_{\alpha_i}$, $i\le k$, and $B\subset \prod_{i\le
k}B_{\alpha_i} \times \prod_{\alpha \in \Lambda\setminus
\{\alpha_1,\ldots,\alpha_k\}}G_{\alpha}$. Let $U$ be an arbitrary
neighborhood of the identity element of $G$; without loss of
generality, we may assume that $U$ is of the form
\[
U=U_{1}\times\ldots \times U_{k}\times \prod_{\alpha \in
\Lambda\setminus \{\alpha_1,\ldots,\alpha_k\}}G_{\alpha},
\]
where $U_{i}$ is a neighborhood of the identity element in
$G_{\alpha_i}$, $i\le k$. For each $i\le k$ there is a positive
integer $n_i$ such that $B_{\alpha_i}\subset U_i^{n_i}$. Put
$n=\max\{n_1,\ldots,n_k\}$. Evidently $B\subset U^n$, i.e. $B$ is
bounded in $G$.
\end{proof}

\medskip
Bounded homomorphisms and their algebraic and topological structures
on any topological algebraic structure are of interest for their own
rights and also for their applications in other areas of
mathematics. For examples, bounded operators on a topological vector
space with suitable topologies form topological algebras; also,
there is a spectral theory for these classes of bounded operators
with some useful applications (see \cite{Omid 1, Tr, Omid 2} for
ample information). Therefore, it will be of interest to consider
different types of bounded homomorphisms on a topological group and
to investigate which topological and algebraic properties of the
underlying topological group can be transferred to the mentioned
classes of homomorphisms. In this note, we examine a few notions of
bounded homomorphisms on a topological group. These classes of
bounded homomorphisms contain continuous homomorphisms. We equip
each class of such bounded homomorphisms by an appropriate topology.
It turns out that they constitute complete topological groups
provided that the underlying topological group is assumed to be
complete and its singletons are bounded.

For terminology and notations used in this paper, we refer the
reader to \cite{Hewitt}, as well as for an abstract and
comprehensive taste of topological groups and related notions. All
topological groups in this note are assumed to be \emph{abelian}. It
is well known that a topological group has a local base at the
identity element consisting of symmetric neighborhoods. Throughout
the paper we consider only such bases. The identity element of a
group $G$ will be denoted by $e_G$.

\section{Results}

\begin{definition}\rm
Let $G$ and $H$ be two topological groups. A homomorphism $T:G \to
H$ is said to be
\begin{itemize}
\item[$(1)$] \emph{{\sf nb}-bounded} if there exists a
neighborhood $U$ of $e_G$ such that $T(U)$ is bounded in $H$;

\item[$(2)$] \emph{{\sf bb}-bounded} if for every bounded set $B
\subset G$, $T(B)$ is bounded in $H$.
\end{itemize}
\end{definition}

The set of all {\sf nb}-bounded ({\sf bb}-bounded) homomorphisms
from a topological group $G$ to a topological group $H$ is denoted
by ${\sf Hom_{nb}}(G,H)$ (${\sf Hom_{bb}}(G,H)$). We write ${\sf
Hom}(G)$ instead of ${\sf Hom}(G,G)$.

\begin{remark}\rm For topological groups $G$ and $H$ the following holds:
\[
{\sf Hom_{nb}}(G,H) \subset {\sf Hom_{bb}}(G,H).
\]

\smallskip
Let $T:G\to H$ be an {\sf nb}-bounded homomorphism. Then it is {\sf
bb}-bounded. For, suppose $B\subset G$ is a bounded set. Since $T$
is ${\sf nb}$-bounded there is a neighborhood $U$ of $e_G$ such that
$T(U)$ is bounded in $H$. Boundedness of $B$ implies $B\subset U^n$
for some natural number $n$. We prove that $T(B)$ is bounded in $H$.
Let $V$ be a neighborhood of $e_H$. Boundedness of $T(U)$ implies
that there is $m\in\mathbb N$ such that $T(U) \subset V^m$. Then
\[
T(B)\subset T(U^n)=(T(U))^n \subset V^{mn},
\]
i.e. $T(B)$ is bounded in $H$.
\end{remark}

Note that the converse is not true as the following example shows.

\begin{example}\rm \label{ex1}
Let $G=\mathbb C-\{0\}$ be the group of non-zero complex numbers,
and $H=G^{\mathbb N}$, the group of all sequences of elements of
$G$, with the pointwise multiplication and the product topology.
Consider the identity homomorphism $1_H$ on $H$. It is easy to see
that $1_H$ is ${\sf bb}$-bounded, but it is not ${\sf nb}$-bounded
since $H$ is not locally bounded.
\end{example}

\begin{remark} \rm
One may verify that each continuous homomorphism is ${\sf
bb}$-bounded. Let $G$ and $H$ be topological groups, $T:G\to H$ be a
continuous homomorphism, and $B$ a bounded subset of $G$. Suppose a
neighborhood $V$ of $e_H$ is given. There exists a neighborhood $U$
of $e_G$ such that $T(U)\subset V$. Also, since $B$ is bounded in
$G$, there is an $n\in \mathbb N$ with $B\subset U ^n$. Thus,
\[
T(B)\subset T(U^n)=(T(U))^n\subset V^n.
\]
Nevertheless, unlike in the case of bounded operators on topological
vector spaces, there is no more relation between continuous
homomorphisms on a topological group and bounded ones, which is
explained in the example below.
\end{remark}

\begin{example} \rm
Let $G$ be $S^1$, the multiplicative group of all complex numbers of
modulus 1, with the trivial (anti-discrete) topology, and $H$ be
$S^1$ with the topology inherited from $\mathbb C$. Consider the
identity homomorphism $i$ from $G$ into $H$. Clearly, $i$ is not
continuous. But it is {\sf nb}-bounded, as well as {\sf bb}-bounded.
\end{example}

\begin{remark} \rm
Note that if $G$ is a topological vector space over $\mathbb R$ or
$\mathbb C$, we have two notions for a bounded set; one of them is
related to the group structure of $G$ and another considers the
scalar multiplication. It is easy to see that these aspects of
bounded sets have the same meaning.
\end{remark}

Let $G$ be a group. For homomorphisms $T$ and $S$ on $G$, define
$TS$ and $T^{-1}$ by
\[
 TS(x):=T(x)S(x) \mbox{ and } T^{-1}(x):=(T(x))^{-1}, \, \, \,
 x\in G.
\]
It is easy to see that with these operations, the class of all
homomorphisms on a  group $G$ forms a group.

\smallskip
Now, assume $G$ is a topological group. The class of all ${\sf
nb}$-bounded homomorphisms on $G$ equipped with the topology of
uniform convergence on some neighborhood of $e_G$ is denoted by
${\sf Hom_{nb}}(G)$. Observe that a net $(S_{\alpha})$ of ${\sf
nb}$-bounded homomorphisms converges uniformly on a neighborhood $U$
of $e_G$ to a homomorphism $S$  if for each neighborhood $V$ of
$e_G$ there exists an $\alpha_0$ such that for each
$\alpha\geq\alpha_0$, $(S_{\alpha}S^{-1})(U)\subset V$.

\smallskip
The class of all ${\sf bb}$-bounded homomorphisms on $G$ endowed
with the topology of uniform convergence on bounded sets is denoted
by ${\sf Hom_{bb}}(G)$. Note that a net $(S_{\alpha})$ of ${\sf
bb}$-bounded homomorphisms uniformly converges to a homomorphism $S$
on a bounded set $B\subset G$ if for each neighborhood $V$ of $e_G$
there is an $\alpha_0$ with $(S_{\alpha}S^{-1})(B) \subset V$ for
each $\alpha\ge \alpha_0$.

\smallskip
The class of all continuous homomorphisms on $G$ equipped with the
topology of ${\sf c}$-convergence is denoted by ${\sf Hom_{c}}(G)$.
A net $(S_{\alpha})$ of continuous homomorphisms ${\sf c}$-converges
to a homomorphism $S$ if for each neighborhood $W$ of $e_G$, there
is a neighborhood $U$ of $e_G$ such that for every neighborhood $V$
of $e_G$ there exists an $\alpha_0$ with
$(S_{\alpha}S^{-1})(U)\subset VW$ for each $\alpha\geq\alpha_0$.

\smallskip
Note that ${\sf Hom_{c}}(G)$ and ${\sf Hom_{bb}}(G)$ form subgroups
of the group of all homomorphisms on $G$. On the other hand, ${\sf
Hom_{nb}}(G)$ is not a group, in general; the identity homomorphism
on $G$ in Example \ref{ex1} does not belong to ${\sf Hom_{nb}}(G)$.

\begin{theorem}
The operations of multiplication and inversion are continuous in
${\sf Hom_{nb}}(G)$ $($with respect to the topology of uniform
convergence on some neighborhood of $e_G${\rm )}.
\end{theorem}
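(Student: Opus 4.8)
The plan is to verify continuity directly through the net characterization of the topology of uniform convergence on a neighborhood, rather than by exhibiting an explicit neighborhood basis. Throughout I would use that $G$ is abelian, so that the group of all homomorphisms on $G$ is abelian under the pointwise operations and the identity $(AB)^{-1}=A^{-1}B^{-1}$ holds there, together with the standing assumption that the local base at $e_G$ consists of symmetric neighborhoods. A preliminary point I would record is that the two operations really map back into ${\sf Hom_{nb}}(G)$: if $T(U_1)$ and $S(U_2)$ are bounded, then on $U=U_1\cap U_2$ one has $TS(U)\subseteq T(U_1)S(U_2)$, and the product of two bounded sets is bounded, since $B_1\subseteq V^{n_1}$ and $B_2\subseteq V^{n_2}$ give $B_1B_2\subseteq V^{n_1+n_2}$; likewise $T^{-1}(U_1)=(T(U_1))^{-1}$ is bounded because $V$ is symmetric. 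Hence $TS$ and $T^{-1}$ are again ${\sf nb}$-bounded, so the operations are well defined as maps into ${\sf Hom_{nb}}(G)$.

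For inversion I would argue that it is essentially an isometry of the uniform structure. Suppose $T_\alpha\to T$, so that there is a neighborhood $U$ of $e_G$ with, for every neighborhood $V$, $(T_\alpha T^{-1})(U)\subseteq V$ eventually. By commutativity $T_\alpha^{-1}(T^{-1})^{-1}=(T_\alpha T^{-1})^{-1}$, so evaluating at $x\in U$ yields $((T_\alpha T^{-1})(x))^{-1}$. Since $V$ is symmetric, $(T_\alpha T^{-1})(U)\subseteq V$ forces $(T_\alpha^{-1}(T^{-1})^{-1})(U)\subseteq V^{-1}=V$. Thus $T_\alpha^{-1}\to T^{-1}$ uniformly on the same neighborhood $U$, and inversion is continuous.

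For multiplication the delicate point is the existential neighborhood hidden in the topology: the two factors need not be controlled on the same neighborhood. I would resolve this with the monotonicity observation that if a net converges uniformly on $U_1$ then it converges uniformly on every smaller neighborhood $U\subseteq U_1$, because $(T_\alpha T^{-1})(U)\subseteq (T_\alpha T^{-1})(U_1)$. So, given $S_\alpha\to S$ on $U_1$ and $T_\alpha\to T$ on $U_2$, both converge uniformly on $U=U_1\cap U_2$. Then, for a prescribed neighborhood $V$ of $e_G$, I would pick a symmetric $V_1$ with $V_1V_1\subseteq V$ (continuity of multiplication in $G$) and use the factorization $S_\alpha T_\alpha(ST)^{-1}=(S_\alpha S^{-1})(T_\alpha T^{-1})$, valid by commutativity. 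Once $\alpha$ is large enough that both $(S_\alpha S^{-1})(U)\subseteq V_1$ and $(T_\alpha T^{-1})(U)\subseteq V_1$, evaluation at $x\in U$ lands in $V_1V_1\subseteq V$, giving $(S_\alpha T_\alpha(ST)^{-1})(U)\subseteq V$. Hence $S_\alpha T_\alpha\to ST$ uniformly on $U$, which is the required joint continuity.

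The step I expect to be the main obstacle is exactly this reconciliation of the ``some neighborhood'' quantifier across the two factors of the product; everything else reduces to the standard $V_1V_1\subseteq V$ trick and the symmetry of the base. The monotonicity remark is what renders the existential $U$ harmless, and it is worth isolating, since the same device is needed whenever one compares convergence on two different neighborhoods.
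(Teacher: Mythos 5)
Your proof is correct and follows essentially the same route as the paper's: the factorization $(S_{\alpha}T_{\alpha})(TS)^{-1}=(S_{\alpha}S^{-1})(T_{\alpha}T^{-1})$ combined with choosing $V_1$ so that $V_1V_1\subseteq V$, and symmetry of the basic neighborhoods (equivalently, continuity of inversion in $G$) for the inverse. You additionally make explicit two points the paper glosses over --- that the two nets may a priori converge uniformly on different neighborhoods (your monotonicity remark) and that the operations actually map back into ${\sf Hom_{nb}}(G)$ --- both of which are legitimate and worthwhile refinements rather than a different method.
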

\begin{proof}
Suppose $(T_{\alpha})$ and $(S_{\alpha})$ are two nets of ${\sf
nb}$-bounded homomorphisms which are convergent uniformly on some
neighborhood $U$ of $e_G$ to the ${\sf nb}$-bounded homomorphisms
$T$ and $S$, respectively. Let $W$ be an arbitrary neighborhood of
$e_G$. There is a neighborhood $V$ of $e_G$ with $VV\subset W$.
There exist some $\alpha_1$ and $\alpha_2$ such that
$(T_{\alpha}T^{-1})(U)\subset V$ for each $\alpha\geq\alpha_1$ and
$(S_{\alpha}S^{-1})(U)\subset V$ for each $\alpha\geq\alpha_2$.
Choose $\alpha_0$ with $\alpha_0\geq\alpha_1$ and
$\alpha_0\geq\alpha_2$. If $\alpha\geq\alpha_0$, then we have
\[
(T_{\alpha}S_{\alpha})(TS)^{-1}(U)\subset
(T_{\alpha}T^{-1})(U)(S_{\alpha}S^{-1})(U)\subset VV\subset W.
\]
Also, because of continuity of the inversion we have
\[
({T_{\alpha}}T^{-1})^{-1}(U)=(T_{\alpha}T^{-1}(U))^{-1}\subset W,
\]
for sufficiently large $\alpha$. This completes the proof.
\end{proof}

\begin{theorem}
The operations of multiplication and inversion are continuous in
${\sf Hom_{bb}}(G)$ with respect to the topology of uniform
convergence on bounded sets.
\end{theorem}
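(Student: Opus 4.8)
The plan is to mirror the proof of the preceding theorem verbatim, replacing the fixed neighborhood $U$ by an arbitrary bounded set $B \subset G$; the topology of uniform convergence on bounded sets is generated by testing against every bounded set, so it suffices to establish the convergence estimates on each such $B$ separately. First I would take two nets $(T_{\alpha})$ and $(S_{\alpha})$ in ${\sf Hom_{bb}}(G)$ converging uniformly on bounded sets to ${\sf bb}$-bounded homomorphisms $T$ and $S$ respectively. Note that $TS$ and $T^{-1}$ are again ${\sf bb}$-bounded, since it was already observed in the excerpt that ${\sf Hom_{bb}}(G)$ forms a subgroup of the group of all homomorphisms on $G$, so the operations do land in the right space.

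To prove continuity of multiplication, I would fix an arbitrary bounded set $B \subset G$ and an arbitrary neighborhood $W$ of $e_G$, and choose a symmetric neighborhood $V$ of $e_G$ with $VV \subset W$. By the definition of uniform convergence on the bounded set $B$, there exist indices $\alpha_1$ and $\alpha_2$ such that $(T_{\alpha}T^{-1})(B) \subset V$ for all $\alpha \geq \alpha_1$ and $(S_{\alpha}S^{-1})(B) \subset V$ for all $\alpha \geq \alpha_2$. Taking $\alpha_0 \geq \alpha_1, \alpha_2$, the crucial point is that for $x \in B$ the abelian structure of $G$ gives the factorization
\[
\bigl[(T_{\alpha}S_{\alpha})(TS)^{-1}\bigr](x) = (T_{\alpha}T^{-1})(x)\,(S_{\alpha}S^{-1})(x) \in VV \subset W,
\]
whence $(T_{\alpha}S_{\alpha})(TS)^{-1}(B) \subset W$ for every $\alpha \geq \alpha_0$. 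Since $B$ and $W$ were arbitrary, this shows $T_{\alpha}S_{\alpha} \to TS$ in ${\sf Hom_{bb}}(G)$. For inversion, on the same $B$ and $W$ I would use that $(T_{\alpha}T^{-1})^{-1}(B) = \bigl((T_{\alpha}T^{-1})(B)\bigr)^{-1} \subset V^{-1} = V \subset W$ for $\alpha \geq \alpha_1$, where the symmetry of $V$ is what closes the argument.

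There is no serious obstacle here: the argument is a direct transcription of the ${\sf Hom_{nb}}(G)$ case, the only substantive input being the identity $(T_{\alpha}S_{\alpha})(TS)^{-1} = (T_{\alpha}T^{-1})(S_{\alpha}S^{-1})$, which relies on commutativity and is available since all groups in the paper are assumed abelian. The one bookkeeping subtlety worth flagging is that, unlike the ${\sf nb}$ case where a single neighborhood $U$ sufficed, here the verification must be carried out for each bounded set $B$; but since a fixed $B$ is treated in isolation and the same net indices work, quantifying over all bounded sets at the end is immediate and requires no further estimate.
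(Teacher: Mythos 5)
Your proof is correct and follows essentially the same route as the paper's: fix a bounded set $B$ and a neighborhood $W$, pick $V$ with $VV\subset W$, and use the abelian factorization $(T_{\alpha}S_{\alpha})(TS)^{-1}=(T_{\alpha}T^{-1})(S_{\alpha}S^{-1})$ pointwise on $B$. Your handling of inversion via the symmetry of $V$ is in fact a slightly cleaner justification than the paper's appeal to ``continuity of the inversion,'' but it is the same underlying idea, since the paper fixes symmetric local bases throughout.
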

\begin{proof}
Suppose $(T_{\alpha})$ and $(S_{\alpha})$ are two nets of ${\sf
bb}$-bounded homomorphisms which are convergent uniformly on bounded
sets to the ${\sf bb}$-bounded homomorphisms $T$ and $S$,
respectively. Fix a bounded set $B\subset G$. Let $W$ be an
arbitrary neighborhood of $e_G$. There exists a neighborhood $V$ of
$e_G$ with $VV\subset W$. There are some $\alpha_1$ and $\alpha_2$
such that $(T_{\alpha}T^{-1})(B)\subset V$ for each
$\alpha\geq\alpha_1$ and $(S_{\alpha}S^{-1})(B)\subset V$ for each
$\alpha\geq\alpha_2$. Choose $\alpha_0$ with $\alpha_0\geq\alpha_1$
and $\alpha_0\geq\alpha_2$. If $\alpha\geq\alpha_0$, then we have
\[
(T_{\alpha}S_{\alpha})(TS)^{-1}(B)\subset (T_{\alpha}T^{-1})(B)
(S_{\alpha}S^{-1})(B) \subset VV\subset W.
\]
Since the inversion in $G$ is continuous, for sufficiently large
$\alpha$ we have
\[
({T_{\alpha}}T^{-1})^{-1}(B)=(T_{\alpha}T^{-1}(B))^{-1}\subset W
\]
which completes the proof.
\end{proof}

\begin{theorem}
The operations of multiplication and inversion are continuous in
${\sf Hom_{c}}(G)$, i.e. ${\sf Hom_{c}}(G)$ is a topological group.
\end{theorem}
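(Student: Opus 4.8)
The plan is to follow the template of the two preceding theorems, the only genuine new feature being the nested quantifier pattern ``$\exists U\,\forall V\,\exists\alpha_0$'' in the definition of ${\sf c}$-convergence. So I fix nets $(T_{\alpha})$ and $(S_{\alpha})$ of continuous homomorphisms ${\sf c}$-converging to $T$ and $S$ respectively, and I aim to show first that $(T_{\alpha}S_{\alpha})$ ${\sf c}$-converges to $TS$. Given an arbitrary neighborhood $W$ of $e_G$, I choose a neighborhood $W'$ with $W'W'\subset W$. Applying ${\sf c}$-convergence of each net to this $W'$ produces neighborhoods $U_1$ (from the $T$-net) and $U_2$ (from the $S$-net) of $e_G$, and I set $U=U_1\cap U_2$. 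The point is that $U_1$ and $U_2$ depend only on $W'$ and not on the later choice of $V$, so intersecting them is legitimate and yields the single $U$ that the definition demands.

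Then, for an arbitrary neighborhood $V$ of $e_G$, I halve it as well, picking $V'$ with $V'V'\subset V$, and extract thresholds $\alpha_1,\alpha_2$ so that $(T_{\alpha}T^{-1})(U)\subset V'W'$ for $\alpha\ge\alpha_1$ and $(S_{\alpha}S^{-1})(U)\subset V'W'$ for $\alpha\ge\alpha_2$. Using the pointwise identity $(T_{\alpha}S_{\alpha})(TS)^{-1}=(T_{\alpha}T^{-1})(S_{\alpha}S^{-1})$, which is valid since $G$ is abelian, I obtain, for every $\alpha$ past a common upper bound $\alpha_0$ of $\alpha_1$ and $\alpha_2$,
\[
(T_{\alpha}S_{\alpha})(TS)^{-1}(U)\subset (T_{\alpha}T^{-1})(U)(S_{\alpha}S^{-1})(U)\subset (V'W')(V'W')=V'V'W'W'\subset VW,
\]
where commutativity is used to separate the $V'$- and $W'$-factors. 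This is exactly ${\sf c}$-convergence of $(T_{\alpha}S_{\alpha})$ to $TS$.

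For inversion I rely on the standing convention that the local base consists of symmetric neighborhoods. Given $W$, I take the neighborhood $U$ supplied by ${\sf c}$-convergence of $(T_{\alpha})$ to $T$; for each $V$ the same thresholds give $(T_{\alpha}T^{-1})(U)\subset VW$. Since $(T_{\alpha}^{-1})(T^{-1})^{-1}=(T_{\alpha}T^{-1})^{-1}$ in the abelian group of homomorphisms, applying inversion yields $(T_{\alpha}^{-1})(T^{-1})^{-1}(U)=\bigl((T_{\alpha}T^{-1})(U)\bigr)^{-1}\subset (VW)^{-1}=VW$, the last equality because $V$ and $W$ are symmetric and $G$ is abelian. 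Hence inversion is ${\sf c}$-continuous, and together with the previous paragraphs this shows that ${\sf Hom_c}(G)$ is a topological group. I expect the only real obstacle to be the bookkeeping of the quantifier order: one must fix $U$ before $V$ enters, which is why the $W$-splitting is performed at the level of $U$ while the $V$-splitting is carried out only afterwards inside the ``$\forall V$'' clause, and making both halvings land inside $VW$ rather than in some coarser set is the crux of the argument.
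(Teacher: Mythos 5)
Your proof is correct and follows essentially the same route as the paper's: the factorization $(T_{\alpha}S_{\alpha})(TS)^{-1}=(T_{\alpha}T^{-1})(S_{\alpha}S^{-1})$, halving of neighborhoods, and symmetry of the basic neighborhoods for the inversion step. If anything, your bookkeeping is more faithful to the $\exists U\,\forall V\,\exists\alpha_0$ quantifier order than the paper's own argument, which fixes $V$ and $W$ simultaneously at the outset and contains undefined symbols $V_2,W_2$.
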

\begin{proof}
Suppose $(T_{\alpha})$ and $(S_{\alpha})$ are two nets of continuous
homomorphisms $c$-converging to the homomorphisms $T$ and $S$,
respectively. Let $W$ and $V$ be arbitrary neighborhoods of $e_G$.
There exist neighborhoods $W_1$ and $V_1$ of $e_G$ such that
$W_1W_1\subset W$ and $V_1V_1\subset V$. There are a neighborhood
$U$ of $e_G$ and some indices $\alpha_1$ and $\alpha_2$ with
$(T_{\alpha}T^{-1})(U)\subset V_1W_1$ for each $\alpha\geq\alpha_1$
and $(S_{\alpha}S^{-1})(U)\subset V_2W_2$ for each
$\alpha\geq\alpha_2$. Choose $\alpha_0$ such that
$\alpha_0\geq\alpha_1$ and $\alpha_0\geq\alpha_2$. If
$\alpha\geq\alpha_0$, then we have
\[
(T_{\alpha}S_{\alpha})(TS)^{-1}(U)\subset (T_{\alpha}T^{-1})(U)
(S_{\alpha}S^{-1})(U)\subset V_1W_1V_2W_2\subset VW.
\]
Now, continuity of the inversion implies
\[
({T_{\alpha}}T^{-1})^{-1}(U)=(T_{\alpha}T^{-1}(U))^{-1}\subset VW,
\]
for sufficiently large $\alpha$. This completes the proof.
\end{proof}

\medskip
In this part, we investigate whether or not each class of bounded
homomorphisms in the assumed topology is uniformly complete.

\begin{lemma}\label{3}
Suppose $(S_{\alpha})$ is a net of continuous homomorphisms which
converges to the homomorphism $S$ in the ${\sf c}$-convergence
topology. Then, $S$ is also continuous.
\end{lemma}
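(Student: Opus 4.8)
The plan is to show that the limit homomorphism $S$ is continuous at $e_G$, which for a group homomorphism suffices to establish continuity everywhere. Unwinding the definition of $\mathsf c$-convergence will be the central mechanism: for each neighborhood $W$ of $e_G$ it gives me a neighborhood $U$ of $e_G$ and, for every neighborhood $V$ of $e_G$, an index $\alpha_0$ with $(S_\alpha S^{-1})(U)\subset VW$ for all $\alpha\ge\alpha_0$. The goal is to produce, given an arbitrary neighborhood $W$ of $e_G$, a neighborhood of $e_G$ that $S$ maps into $W$.

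First I would fix a neighborhood $W$ of $e_G$ and choose a symmetric neighborhood $W_1$ with $W_1 W_1 W_1\subset W$ (possible by repeated use of continuity of multiplication in $G$). Applying $\mathsf c$-convergence to $W_1$, I obtain a neighborhood $U$ of $e_G$ such that for every neighborhood $V$ there is an index past which $(S_\alpha S^{-1})(U)\subset V W_1$. The key idea is to then specialize $V=W_1$ and pick a single concrete index $\alpha$ for which $(S_\alpha S^{-1})(U)\subset W_1 W_1$. For this fixed continuous homomorphism $S_\alpha$, continuity gives a neighborhood $U'\subset U$ of $e_G$ with $S_\alpha(U')\subset W_1$.

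Next I would estimate $S$ on $U'$ by inserting $S_\alpha$: for $x\in U'$ write $S(x)=S_\alpha(x)\cdot(S_\alpha(x))^{-1}S(x)=S_\alpha(x)\,(S_\alpha S^{-1})(x)^{-1}$. Since $x\in U'\subset U$, the factor $(S_\alpha S^{-1})(x)$ lies in $W_1 W_1$, so using that the neighborhoods are symmetric its inverse lies in $W_1 W_1$ as well; combined with $S_\alpha(x)\in W_1$ this yields $S(x)\in W_1 W_1 W_1\subset W$. Hence $S(U')\subset W$, proving continuity of $S$ at $e_G$, and therefore everywhere.

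The main obstacle I anticipate is the bookkeeping in the definition of $\mathsf c$-convergence, where the neighborhood $U$ must be chosen \emph{before} $V$: I must extract $U$ from the outer quantifier first and only afterward plug in a convenient $V$ to pin down a usable index. A secondary subtlety is that $\mathsf c$-convergence only controls $(S_\alpha S^{-1})$ and not $S_\alpha$ or $S$ individually, so the algebraic splitting $S=S_\alpha\cdot(S_\alpha S^{-1})^{-1}$ — valid because the homomorphisms form an abelian group under the pointwise operations defined earlier — is what lets me combine the continuity of the single homomorphism $S_\alpha$ with the uniform control on the difference. Getting the symmetry of the neighborhoods and the number of factors ($W_1$ three times) to match up is the place where care is required, but no deep idea beyond the triangle-inequality-style splitting is needed.
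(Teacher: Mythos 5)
Your proposal is correct and follows essentially the same route as the paper's proof: choose $W_1$ with $W_1^3\subset W$, use ${\sf c}$-convergence (with the inner neighborhood specialized to $W_1$) to get $(S_\alpha S^{-1})(U)\subset W_1^2$ for one fixed large $\alpha$, shrink to $U'\subset U$ by continuity of $S_\alpha$, and split $S=S_\alpha\cdot(S_\alpha S^{-1})^{-1}$. The only difference is cosmetic: you make explicit the use of symmetric neighborhoods when inverting the factor in $W_1^2$, which the paper leaves implicit.
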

\begin{proof}
Let an arbitrary neighborhood $W$  of $e_G$ be given. Choose a
neighborhood $V$ of $e_G$ such that $V^3\subset W$. There are a
neighborhood $U$ of $e_G$ and an $\alpha_0$ with
$(S_{\alpha}S^{-1})(U)\subset V^2$ for each $\alpha\ge\alpha_0$. Fix
an $\alpha\ge\alpha_0$. There exists a neighborhood $U_1\subset U$
such that $S_{\alpha}(U_1)\subset V$. From here, together with
$(S_{\alpha}S^{-1})(U_1)\subset V^2$, it follows
\[
S(U_1)\subset S_{\alpha}(U_1) V^2\subset V^3 \subset W,
\]
as desired.
\end{proof}

\begin{lemma}\label{2}
If $(S_{\alpha})$ is a net of ${\sf bb}$-bounded homomorphisms which
converges to the homomorphism $S$ in the topology of uniform
convergence on bounded sets. Then $S$ is {\sf bb}-bounded.
\end{lemma}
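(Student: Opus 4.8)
The plan is to fix an arbitrary bounded set $B\subset G$ and verify directly that $S(B)$ is bounded, i.e. that for every neighborhood $V$ of $e_G$ there is a positive integer $n$ with $S(B)\subset V^n$. The guiding idea is that convergence in the topology of uniform convergence on bounded sets lets me approximate $S$ on $B$ by a single member $S_{\alpha_0}$ of the net, and that member is already known to be ${\sf bb}$-bounded; I then glue together the ``smallness'' of the difference $S_{\alpha_0}S^{-1}$ on $B$ with the boundedness of $S_{\alpha_0}(B)$.

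More precisely, first I would fix a symmetric neighborhood $V$ of $e_G$. By the definition of convergence on the bounded set $B$, there exists an index $\alpha_0$ with $(S_{\alpha_0}S^{-1})(B)\subset V$. Unwinding this pointwise and using that $G$ is abelian and $V$ is symmetric, for each $x\in B$ one obtains $S(x)\in S_{\alpha_0}(x)V$, hence the set inclusion $S(B)\subset S_{\alpha_0}(B)\,V$.

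Next I would invoke the hypothesis that $S_{\alpha_0}$ is ${\sf bb}$-bounded: since $B$ is bounded, $S_{\alpha_0}(B)$ is bounded in $G$, so there is an $m\in\mathbb N$ with $S_{\alpha_0}(B)\subset V^m$. Combining the two inclusions yields $S(B)\subset V^m V=V^{m+1}$, and taking $n=m+1$ completes the verification that $S(B)$ is bounded. As $B$ was an arbitrary bounded set, $S$ is ${\sf bb}$-bounded.

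I do not anticipate a serious obstacle here; the only point requiring a little care is the bookkeeping in passing from the pointwise statement $(S_{\alpha_0}S^{-1})(x)\in V$ to the set inclusion $S(B)\subset S_{\alpha_0}(B)V$, where symmetry of $V$ and commutativity of $G$ are used. The conceptual content is simply that a single approximant $S_{\alpha_0}$ suffices---no limiting argument over $\alpha$ is needed---because boundedness of $S(B)$ only asks for an inclusion into some power $V^n$, and the extra factor $V$ coming from the approximation merely increases the exponent by one.
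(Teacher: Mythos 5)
Your proposal is correct and follows essentially the same route as the paper's own proof: pick a single approximant $S_{\alpha_0}$ from the net with $(S_{\alpha_0}S^{-1})(B)\subset V$, deduce $S(B)\subset S_{\alpha_0}(B)V$, and then use the ${\sf bb}$-boundedness of $S_{\alpha_0}$ to land in $V^{m+1}$. Your extra care in unwinding the pointwise inclusion via symmetry of $V$ and commutativity of $G$ is exactly the bookkeeping the paper leaves implicit.
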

\begin{proof}
Let $W$ be an arbitrary neighborhood of $e_G$. Fix a bounded set $B
\subset G$. There is an $\alpha_0$ such that
$(S_{\alpha}S^{-1})(B)\subset W$ for each $\alpha\ge\alpha_0$. Fix
an $\alpha\ge\alpha_0$. There exists a natural $n$  with
$S_{\alpha}(B)\subset W^n$, so that
\[
S(B)\subset (S_{\alpha}(B))W\subset W^n W= W^{n+1},
\]
as we wanted.
\end{proof}

\begin{remark} \rm \label{4}
The class ${\sf Hom_{nb}}(G)$ can contain a Cauchy sequence whose
limit is not an $nb$-bounded homomorphism; in other words, ${\sf
B_{nb}}(G)$ is not uniformly complete in the assumed topology. Let
$G$ be as in Example \ref{ex1} and $(S_n)$ be a sequence of
homomorphisms on $G$ which are defined as follows:
\[
S_n((x_n))=(x_1,\ldots,x_n,1,\ldots).
\]
Each $S_n$ is ${\sf nb}$-bounded. For, if $U_n$ is the neighborhood
of $e_G$ defined by
\[
U_n=\{(x_n), |x_i-1|\le \frac{1}{2}, i=1,2,\ldots,n\},
\]
then, as it is easy to see, $S_n(U_n)$ is bounded in $G$. On the
other hand, it is not difficult to show that $(S_n)$ is uniformly
convergent to the identity homomorphism $1_G$ on $G$. But we have
seen in Example \ref{ex1} that $1_G$ is not ${\sf nb}$-bounded.
\end{remark}

Now, we are going to find conditions under which each class of
considered bounded homomorphisms is topologically complete. In the
case of bounded operators on topological vector spaces, absorbing
neighborhoods and local convexity are two fruitful tools for
discovering conditions (see \cite{Omid 1}). In the topological group
version, it turns out that boundedness of every singleton is a handy
tool. By \cite[Theorem 7.4]{Hewitt}, when $G$ is a connected
topological group, then it is absorbed by positive powers of any
neighborhood of $e_G$, so that every singleton is bounded. Note that
connectedness is not a necessary condition; for example, the
additive group $\mathbb Q$ of rational numbers is totally
disconnected, but every its singleton is bounded. There are examples
of abelian topological groups whose singletons are not bounded. For
example, let $G$ be an abelian topological group and consider the
topological group $H=G\times {\mathbb Z}_2$. Then $G\times \{0\}$ is
a zero neighborhood which is not absorbing, so that singletons are
not bounded.

In what follows we assume that every singleton in the underlying
topological group $G$ is bounded.

\begin{theorem}
If $G$ is a complete group, then ${\sf Hom_{c}}(G)$ is complete.
\end{theorem}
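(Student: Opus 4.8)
The plan is to take a Cauchy net $(S_\alpha)$ in ${\sf Hom_c}(G)$, manufacture a pointwise limit using completeness of $G$, verify that this limit is a homomorphism, promote the pointwise convergence to ${\sf c}$-convergence, and finally invoke Lemma \ref{3} to conclude that the limit is continuous and hence lies in ${\sf Hom_c}(G)$.

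First I would try to produce, for every $x \in G$, a limit $S(x) := \lim_\alpha S_\alpha(x)$. Since $G$ is complete, it suffices to show that $(S_\alpha(x))$ is Cauchy in $G$. Here the hypothesis that every singleton is bounded is indispensable: fixing a neighborhood $W$ of $e_G$ and extracting from the Cauchy condition a neighborhood $U$ with $(S_\alpha S_\beta^{-1})(U) \subset VW$ for each $V$ and all sufficiently large $\alpha,\beta$, boundedness of $\{x\}$ gives some $n$ with $x \in U^n$, whence
\[
(S_\alpha S_\beta^{-1})(x) \in \big((S_\alpha S_\beta^{-1})(U)\big)^n \subset (VW)^n .
\]
Once $S$ is defined pointwise, the homomorphism property is automatic: $S(xy) = \lim_\alpha S_\alpha(x) S_\alpha(y) = S(x) S(y)$ by joint continuity of multiplication in $G$, and $S(x^{-1}) = S(x)^{-1}$ by continuity of inversion.

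To recover ${\sf c}$-convergence of $(S_\alpha)$ to $S$, I would fix $W$, take the neighborhood $U$ supplied by the Cauchy condition, and for a fixed large $\alpha$ and each $u \in U$ let $\beta$ run along the net in the relation $(S_\alpha S_\beta^{-1})(u) \in VW$. Since $S_\beta(u) \to S(u)$ and inversion is continuous, passing to the limit yields $(S_\alpha S^{-1})(u) \in \overline{VW}$. Choosing $V$ in advance with $\overline{VW} \subset V'W$ (possible because in a topological group the closure of a small neighborhood stays inside a slightly larger product) then delivers exactly the containment $(S_\alpha S^{-1})(U) \subset V'W$ demanded by the definition of ${\sf c}$-convergence; the auxiliary neighborhood $V$ in that definition is precisely what absorbs the closure. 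Lemma \ref{3} then makes $S$ continuous, so $S \in {\sf Hom_c}(G)$ and the net converges to it.

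The main obstacle I anticipate is the pointwise Cauchy step. The ${\sf c}$-convergence controls the homomorphisms only on a single neighborhood $U$, and reaching an arbitrary point forces the passage $x \in U^n$, which raises the estimate to the $n$-th power $(VW)^n$. The subtlety is that $n$ depends on $U$, hence on the target $W$, so one cannot simply fix $W$ small at the outset; the containment $(VW)^n \subset N$ for a prescribed $N$ must be arranged with foresight about how $n$ grows as $W$ shrinks. Reconciling these competing dependences---most naturally by constructing the limit on a single neighborhood and then extending it multiplicatively along $G = \bigcup_n U^n$, which is exactly the decomposition guaranteed by boundedness of singletons---is where the real work lies.
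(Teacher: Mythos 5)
Your overall strategy coincides with the paper's: construct a pointwise limit $S$ from completeness of $G$, check it is a homomorphism, upgrade pointwise convergence to ${\sf c}$-convergence, and invoke Lemma \ref{3} to place $S$ in ${\sf Hom_{c}}(G)$. In the later steps you are actually more careful than the paper, which disposes of the passage from pointwise convergence to ${\sf c}$-convergence with the single phrase that ``this convergence holds in ${\sf Hom_{c}}(G)$''; your closure argument, taking the limit in $\beta$ to get $(S_\alpha S^{-1})(u)\in\overline{VW}$ and choosing $V$ in advance so that $\overline{VW}\subset V'W$, is a legitimate way to make that assertion precise, and it is exactly the place where the auxiliary neighborhood $V$ in the definition of ${\sf c}$-convergence earns its keep.

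The genuine gap is the step you yourself flag and then leave open: you never establish that $(S_\alpha(x))$ is Cauchy in $G$. Your estimate $(S_\alpha S_\beta^{-1})(x)\in (VW)^n$ cannot be forced into a prescribed neighborhood, because $n$ depends on $U$, which depends on $W$, and your closing sentence concedes that reconciling these dependences ``is where the real work lies'' --- that is, you name the obstacle instead of removing it. The paper's device for this step is to reorder the quantifiers: first argue that for each $u$ in the fixed neighborhood $U$ the net $(T_\alpha(u))$ is Cauchy, then use boundedness of the singleton $\{x\}$ to write $x=u_1\cdots u_n$ with $u_i\in U$, so that $T_\alpha(x)=T_\alpha(u_1)\cdots T_\alpha(u_n)$ is a product of \emph{finitely many} Cauchy nets with $n$ fixed once and for all; the standard fact that such a product is Cauchy then handles an arbitrary target $N$ by choosing $N_1$ with $N_1^{\,n}\subset N$ and applying Cauchyness of each factor to $N_1$. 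This is precisely the ``extend multiplicatively along $G=\bigcup_n U^n$'' idea you gesture at, but to have a proof you must actually carry it out --- and in doing so you will need to justify that each $(S_\alpha(u))$, $u\in U$, is Cauchy with respect to \emph{every} target neighborhood, not merely the particular $W$ from which $U$ was extracted, a point on which the paper's own argument is also terse.
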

\begin{proof}
Suppose $(T_{\alpha})$ is a Cauchy net in ${\sf Hom_{c}}(G)$ and $W$
is an arbitrary neighborhood of $e_G$. Choose a neighborhood $V$ of
$e_G$ with $VV\subset W$. Find a neighborhood $U$ of $e_G$ and an
index $\alpha_0$ such that $(T_{\alpha}{T_{\beta}}^{-1}(U))\subset
VV$ for each $\alpha\geq \alpha_0$ and for each $\beta\geq\alpha_0$.
Suppose $x\in G$.

First, assume that $x\in U$. Then $T_{\alpha}{T_{\beta}}^{-1}(x)\in
W$. Thus, $((T_{\alpha}(x))$ is a Cauchy net in $G$.

For an arbitrary $x\in G$, there is a positive integer $n$ such that
$x\in U^n$. Since the product of Cauchy nets in a topological group
is again Cauchy, it follows that $(T_{\alpha}(x))$ is a Cauchy net
in $G$, so that it converges. Put $T(x):=\lim T_{\alpha}(x)$ for
each $x\in G$. Since this convergence holds in ${\sf Hom_{c}}(G)$,
by Lemma \ref{3}, $T$ is also continuous, and this completes the
proof.
\end{proof}

\begin{theorem}
If $G$ is a complete group, then ${\sf Hom_{bb}}(G)$ is also
complete.
\end{theorem}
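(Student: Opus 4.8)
The plan is to mimic the argument for ${\sf Hom_c}(G)$, exploiting the standing assumption that every singleton is bounded in order to produce a pointwise limit, and then to promote pointwise convergence to uniform convergence on bounded sets by a closure argument. Let $(T_\alpha)$ be a Cauchy net in ${\sf Hom_{bb}}(G)$; thus for every neighborhood $W$ of $e_G$ and every bounded set $B\subset G$ there is an index $\alpha_0$ with $(T_\alpha T_\beta^{-1})(B)\subset W$ whenever $\alpha,\beta\geq\alpha_0$.

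First I would fix $x\in G$ and apply the Cauchy condition to the bounded singleton $B=\{x\}$. This yields that $(T_\alpha(x))$ is a Cauchy net in $G$, and since $G$ is complete it converges; I set $T(x):=\lim_\alpha T_\alpha(x)$ for each $x\in G$. That $T$ is a homomorphism then follows from the continuity of multiplication in $G$, since $T(xy)=\lim T_\alpha(x)T_\alpha(y)=T(x)T(y)$.

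Next I would show that $T_\alpha\to T$ in the topology of uniform convergence on bounded sets. Given a bounded set $B$ and a neighborhood $W$ of $e_G$, choose a symmetric neighborhood $V$ with $VV\subset W$, so that $\overline{V}\subset W$. Pick $\alpha_0$ as in the Cauchy condition for $B$ and $V$. Fixing $\alpha\geq\alpha_0$ and $x\in B$, continuity of inversion gives $T_\beta^{-1}(x)\to T^{-1}(x)$, whence $(T_\alpha T_\beta^{-1})(x)\to(T_\alpha T^{-1})(x)$ as $\beta$ grows; since every term with $\beta\geq\alpha_0$ lies in $V$, the limit lies in $\overline{V}\subset W$. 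Therefore $(T_\alpha T^{-1})(B)\subset W$ for all $\alpha\geq\alpha_0$, which is precisely the required convergence.

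Finally, since $(T_\alpha)$ converges to the homomorphism $T$ in this topology, Lemma \ref{2} guarantees that $T$ is itself ${\sf bb}$-bounded, so $T\in{\sf Hom_{bb}}(G)$ and the Cauchy net has a limit in the space. I expect the main obstacle to be the third step: transferring the two-parameter Cauchy estimate to an estimate involving the limit homomorphism $T$ forces a passage to the closure $\overline{V}$, and one must be careful to invoke continuity of inversion so that the net $(T_\alpha T_\beta^{-1})(x)$ genuinely converges to $(T_\alpha T^{-1})(x)$.
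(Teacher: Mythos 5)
Your proposal is correct and follows essentially the same route as the paper: boundedness of singletons gives a pointwise limit $T$, the Cauchy estimate on a bounded set is passed to the limit in $\beta$ to get uniform convergence on bounded sets, and Lemma \ref{2} then shows $T$ is ${\sf bb}$-bounded. The only cosmetic difference is that you handle the limit in $\beta$ via the closure $\overline{V}\subset VV\subset W$, whereas the paper writes $T_{\alpha}T^{-1}(x)=T_{\alpha}T_{\beta}^{-1}(x)\,T_{\beta}(x)T^{-1}(x)\in VV\subset W$ directly.
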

\begin{proof}
Suppose $(T_{\alpha})$ is a Cauchy net in ${\sf Hom_{bb}}(G)$ and
$W$ is an arbitrary neighborhood of $e_G$. One can find a
neighborhood $V$ of $e_G$ such that $VV \subset W$. Since every
singleton $x\in G$ is bounded, there is an $\alpha_0$ such that
$T_{\alpha}{T_{\beta}}^{-1}(x)\in V$ for each $\alpha\geq \alpha_0$
and each $\beta\geq\alpha_0$. One concludes that $((T_{\alpha}(x))$
is a Cauchy net in $G$. Therefore, it converges. Put $T(x):=\lim
T_{\alpha}(x)$, for each $x\in G$. Now fix a bounded set $B\subset
G$. For sufficiently large $\alpha$ and $\beta$, we have
$T_{\alpha}{T_{\beta}}^{-1}(B)\subset V$, so that for each $x\in B$,
\[
T_{\alpha}{T_{\beta}}^{-1}(x)\in V.
\]
For sufficiently large $\beta$ we have $T_{\beta}(x)T^{-1}(x)\in V$,
and therefore
\[
T_{\alpha}{T}^{-1}(x)=
T_{\alpha}{T_{\beta}}^{-1}(x)T_{\beta}(x){T}^{-1}(x)\in VV \subset
W.
\]
Since this convergence holds in ${\sf Hom_{bb}}(G)$, by Lemma
\ref{2}, $T$ is also ${\sf bb}$-bounded, as required.
\end{proof}

\begin{remark} \rm
Note that when $G$ is a complete group, then ${\sf Hom_{nb}}(G)$
might fail to be a complete topological group. Consider Example
\ref{ex1} and Remark \ref{4}.
\end{remark}

\section{Conclusion}

We considered here two kinds of bounded homomorphisms in topological
groups. It would be interesting to consider other sorts of bounded
homomorphisms as well. Recall that a subset $B$ of a topological
groups $G$ is \emph{$\aleph_0$-bounded} if for each neighborhood $U$
of $e_G$ there is a countable set $C$ such that $B\subset CU$;
$B\subset G$ is \emph{ Menger-bounded} (or shortly \emph{{\sf
M}-bounded}) if for each sequence $(U_n:n\in\mathbb N)$ of
neighborhoods of $e_G$ there is a sequence $(F_n:n\in\mathbb N)$ of
finite subsets of $G$ such that $B \subset \bigcup_{n\in\mathbb
N}F_nU_n$. If in the last definition $F_n$s are singletons, then $B$
is said to be \emph{Rothberger-bounded} (shortly \emph{${\sf
R}$-bounded} (see \cite{coc11, star-survey} for more details about
these classes of sets). A homomorphism $T$ between topological
groups $G$ and $H$ is said to be:
\begin{itemize}
\item[$(i)$] \emph{{\sf $\omega$b}-bounded} if for each
$\aleph_0$-bounded subset $B$ of $G$, $T(B)$ is bounded in $H$;

\item[$(ii)$] \emph{{\sf mb}-bounded} if for each ${\sf M}$-bounded
subset $B$ of $G$, $T(B)$ is bounded in $H$;

\item[$(iii)$] \emph{{\sf rb}-bounded} if for each ${\sf R}$-bounded
subset $B$ of $G$, $T(B)$ is bounded in $H$.
\end{itemize}

\section*{Acknowledgement}

We are grateful to the referees for useful remarks and suggestions.



\begin{thebibliography}{33}

\bibitem{Azar} K. Haghnejad Azar, Bounded topological groups, Preprint.

\bibitem{coc11} L. Babinkostova, Lj.D.R. Ko\v cinac, M. Scheepers,
Combinatorics of open covers (XI): Menger- and Rothberger-bounded
groups, Topology Appl. 154:7 (2007) 1269--1280.

\bibitem{Omid 1} S. Hejazian, M. Mirzavaziri, O. Zabeti, Bounded operators
on topological vector spaces and their spectral radii, Filomat 26:6
(2012) 1283--1290

\bibitem{Hewitt} E. Hewitt, K.A. Ross, Abstract Harmonic
Analysis, Springer, Berlin, 1963.

\bibitem{star-survey} Lj.D.R. Ko\v cinac, Star selection principles:
A survey, Khayyam J. Math. 1:1 (2015) 82--106.

\bibitem{Tr} V.G. Troitsky, Spectral radii of bounded linear operators
on topological vector spaces, J. PanAmerican Math. 11:3 (2001)
1--35.

\bibitem{Omid 2} O. Zabeti, Topological algebras of bounded operators on
topological vector spaces, J. Adv. Res. Pure Math. 3:1 (2011)
22--26.

\end{thebibliography}
\end{document}